\numberwithin{equation}{section}
\newtheorem{theorem}{Theorem}[section]
\newtheorem{remark}{Remark}[section]
\newtheorem{definition}{Definition}[section]
\title{\textbf{Generalized Nonlinear Yule Models}}
\author{$\text{Petr Lansky}^\alpha$, $\text{Federico Polito}^{\beta,*}$ \& $\text{Laura Sacerdote}^\beta$\\
	\footnotesize{${}^\alpha$Department of Mathematics and Statistics, Masaryk University, Brno, Czech Republic}\\
	\footnotesize{${}^\beta$Dipartimento di Matematica \emph{G.~Peano}, Universit\`a degli Studi di Torino, Torino, Italy}\\
	\footnotesize{${}^*$Corresponding author. Email: \texttt{federico.polito@unito.it}, Tel: +39 011 6702937}}
\date{}
\begin{document}

	\maketitle

	\begin{abstract}

		With the aim of considering models related to random graphs growth exhibiting persistent memory,
		we propose a fractional nonlinear modification of the classical Yule model often studied in the context of macroevolution.
		Here the model is analyzed and interpreted in the framework of the development of networks such as the World Wide Web.
		Nonlinearity is introduced by replacing the linear birth process governing the growth of the in-links of each
		specific webpage with a fractional nonlinear birth process with completely general birth rates.
		
		Among the main results we derive the explicit distribution of the number of in-links of a webpage chosen uniformly at random recognizing
		the contribution to the asymptotics and the finite time correction. The mean value of the latter distribution is also calculated
		explicitly in the most general case. Furthermore, in order to show the usefulness of our results, we particularize them in
		the case of specific birth rates giving rise to a saturating behaviour, a property that is often observed in nature.
		The further specialization to the non-fractional case allows us to extend the Yule model
		accounting for a nonlinear growth.
		
		\vspace{.2cm}
		
		\textit{Keywords:} Yule model; Nonlinear birth process; Fractional calculus; Saturation.
		
		\vspace{.2cm}
		
		\textit{MSC2010: 60G22; 60J80; 05C80}

	\end{abstract}

	\section{Introduction and background}

		The seminal paper \citep{yule} was the original inspiration for many studies appeared from the second half of the last century till now.
		It  contains implicitly the preferential attachment paradigm, a concept that had an enormous success after the appearance of the
		Barab\'asi--Albert model \citep{MR2091634}
		describing the growth of the World Wide Web (WWW). Preferential attachment models are undoubtedly one of
		the most studied and appreciated classes of network growth models
		\citep{krapivsky2000connectivity, PhysRevLett.85.4633, Bollobas2001, krapivsky2002finiteness, MR2788549, dereich2016distances}.
		The relevant literature is vast and spread in many different fields,
		ranging from mathematics
		to physics, computer science, biology, ecology, and many others. It would seem futile to aspire in giving here a thorough review
		of the literature and in the following we will limit ourselves to cite papers only directly relevant for our work.
		
		The Yule model is a continuous time linear model originally motivated by the study of macroevolutionary dynamics. Later, it was revisited
		to describe a variety of phenomena, including the growth of webpages and links in the WWW. A second class of preferential
		attachment models derives from the Simon model \citep{MR0073085}, a discrete time model originally
		proposed to describe the count of words in a
		text and then used in different fields \citep{MR2788549}.  
		
		Yule, Simon and Barab\'asi--Albert models share asymptotic degree distributions with tails characterized by a power-law behaviour.
		This fact has determined a frequent confusion among them. In a recent paper \citep{pachon2015random} we point out the existing
		relationships and differences
		between these three models making use of random graph theory. In particular, we show that the Yule model can be related
		to the continuous-time limit of a sequence of suitably rescaled Simon models.
		The existence of a well determined relationship between Simon and Yule models increases the interest for the Yule model
		itself that is mathematically more tractable than its discrete-time analogue. 
		Beside the preferential attachment assumption, further hypotheses characterize Yule, Simon and Barab\'asi--Albert models.
		In fact they are all Markovian and share linear growth rates.
		The Markov property of the Yule model is determined by the intrinsic
		exponential nature of the waiting times for the appearance of genera and
		of species for each newly created genus (in the original formulation, see \cite{yule}). In the sequel we will refer to webpages in place of
		genera and to in-links in substitution to species.
		
		The success in the modelling a diversified set of phenomena has increased the interest in these models and has suggested to
		attempt possible improvements. Generalizations of the Yule model have already appeared in the literature
		\citep[e.g.][]{maruvka2013model,reed2002size}.
		In \citet{lansky2014role}
		we show that the introduction of the detachment of in-links leads to a good fit of recent WWW data. 
		We realized, however, that even a generalized Yule model (without detachment)
		in the cases of non Markov and nonlinear rates models, or even only nonlinear rates,
		was not yet studied in the literature, to the best of our knowledge. 
		
		In Section \ref{results} we introduce a generalization of the Yule model removing the Markov hypothesis and assuming nonlinear
		rates for the number of in-links growth. In Section \ref{satura} we exemplify our model in the special case of rates of in-links characterized
		by saturation and in presence of non-Markovian memory for the in-link growth. To pursue this last aim we substitute the
		geometric law (corresponding to a linear birth process) for the growth of the number of in-links by a different class
		of counting processes called fractional nonlinear birth processes. These processes are characterized by a parameter that
		accounts for the length of the memory of the process. They coincide with the classical nonlinear birth processes for a specific
		choice of the characterizing parameter. Notice that we consider processes with general nonlinear birth rates.
		The only assumption we introduce is that the rates are such that explosion of the processes in finite time is not allowed.
				
		Fractional point processes in the recent years have been object of intense study and development starting
		from the simplest ones such as the fractional homogeneous Poisson process or the fractional linear birth process and going further
		till processes defined via fairly complex random time-changes.
		For the sake of clarity and completeness, in the Appendix \ref{appe} we report a brief construction of the fractional
		nonlinear birth processes. Here we limit ourselves to refer to some of the papers present in the literature
		such as \citet{MR2007003,MR2535014,MR2120631,politi2011full,MR2990704,MR2835248,MR3239686,MR3304182,MR2899530}.
		The fractional nonlinear birth process was studied in \citet{MR2730651,MR3021490}.
		Some specific cases and related models are given in \citet{MR2880036} and \citet{MR3336844}.

	\section{The Classical Yule Model}
	
		\label{results}
		We consider the growth of a random network described through a Yule model \citep{pachon2015random}.
		An initial webpage appears with a single in-link at
		time $t=0$. Then, each in-link starts duplicating at a constant rate $\lambda>0$. Hence, the number of in-links for
		each webpage evolves as a homogeneous linear birth process (Yule process). In turn, new webpages
		(each of them with a single in-link) are created at a constant rate $\beta>0$. Similarly
		to in-links belonging to the same webpage, also webpages develop independently as a Yule process of parameter $\beta$.
				
		Let us indicate with $N_\beta(t)$ the process counting the number of webpages and with $N_\lambda(t)$ that counting
		the number of in-links of a given webpage.
		Recall now that the state probability distribution $\mathbb{P} ( N_\lambda(t) = n )$, $n \ge 1$, of
		a Yule process is geometric and reads \cite{bailey}
		\begin{align}
			\label{bai}
			\mathbb{P} ( N_\lambda(t) = n ) = e^{-\lambda t} (1-e^{-\lambda t})^{n-1}, \qquad n \ge 1, \: t \ge 0.
		\end{align}
		The Yule process lies in the class of the so-called \emph{processes with the
		order statistic property} \cite{neuts} like for instance the homogeneous Poisson process.
		For the latter, this means that, given the number of occurred event at time $t$, the random instants
		of events occurance are distributed as the order statistics of iid uniform random variables on the interval
		$(0,t)$.
		In general, for a counting process $K(t)$, $t \ge 0$, with the order statistics property, given the number of event
		registered at time $t$, the random jump instants are distributed as the order statistics
		of iid random variables with distribution function given by (\citet{crump}, Theorem 1; see also \citet{feigin,puri})
		\begin{align*}
			k(\tau) = \frac{\mathbb{E} K(\tau) - K(0)}{\mathbb{E} K(t) - K(0)}, \qquad 0 \le \tau \le t.
		\end{align*}

		In the case of the Yule process we have $N_\beta(0)=1$ and $\mathbb{E}N_\beta(t) = e^{\beta t}$.
		Hence, conditioning
		on the number of webpages at time $t$, the
		random instants at which new webpages appear are distributed as the order statistics
		of iid random variables with distribution function
		\begin{align}
			\label{distrold}
			\mathbb{P} ( T \le \tau ) = \frac{e^{\beta \tau}-1}{e^{\beta t}-1},  \qquad 0 \le \tau \le t.
		\end{align}
		Let us now define the \emph{size} of a webpage as the number of its in-links.
		Our purpose is to identify the probability distribution of the size of a webpage extracted uniformly at random at time $t$.
		In order to do so, denote this random quantity by $\mathcal{N}^Y_t$ and call $\mathcal{N}^Y
		= \lim_{t \to \infty}\mathcal{N}^Y_t$.
		By conditioning on the random creation time $T$ of the webpage, we can write that
		\begin{align}
			\label{lenovo}
			\mathbb{P} (\mathcal{N}^Y_t = n )
			& = \mathbb{E}_T \mathbb{P} (N_\lambda(t)=n|N_\lambda(T)=1) \\
			& = \int_0^t e^{-\lambda(t-\tau)} (1-e^{-\lambda(t-\tau)})^{n-1} \frac{\beta e^{\beta \tau}}{e^{\beta t}-1} \textup{d} \tau\notag\\
			& = \frac{\beta}{1-e^{-\beta t}} \int_0^t e^{-\beta y} e^{-\lambda y} (1-e^{-\lambda y})^{n-1} \textup{d}y,
			\qquad n \ge 1, \: t \ge 0. \notag
		\end{align}
		In the limit for $t \to \infty$ we clearly obtain
		\begin{align}
			\label{two}
			\mathbb{P} ( \mathcal{N}^Y = n ) & = \int_0^\infty \beta e^{-\beta y} e^{-\lambda y}
			(1-e^{-\lambda y})^{n-1} \textup{d}y, \qquad n \ge 1,
		\end{align}
		which leads after some steps to
		\begin{align}
			\label{yule}
			\mathbb{P}(\mathcal{N}^Y=n) = \frac{\beta}{\lambda} \frac{\Gamma(n)\Gamma
			\left( 1+ \frac{\beta}{\lambda} \right)}{\Gamma \left( n+1+\frac{\beta}{\lambda} \right)}, \qquad n \ge 1.
		\end{align}
		Distribution \eqref{yule} is known as Yule or Yule--Simon distribution \cite{bala}.
		
		In the next section we will present a generalization of the classical Yule model in which the Yule processes modelling the evolution
		of the number of in-links for each webpage are substituted by fractional nonlinear birth processes (see the Appendix).
		Briefly, this corresponds
		to replacing formula \eqref{bai} with the state probabilities \eqref{nlinearnu}.

	\section{Generalized Nonlinear Yule Model}

		As outlined in the introductory section, our aim is to introduce a generalization of the Yule model presented in the preceding
		section by allowing non-Markov dependence and full nonlinear rates for the birth process governing the developing
		of the in-links. Let us thus consider a Yule-like model composed by 
		\begin{itemize}
			\item a homogeneous Yule process of rate $\beta>0$ for the development of webpages; 
			\item independent copies of a fractional nonlinear birth process $\mathfrak{N}^\nu(t)$, $t \ge 0$,
				of parameter of fractionality $\nu \in (0,1)$,
				for the development of in-links for each webpage;
			\item rates $\lambda_k$, $k=1,2,\dots$, for the fractional nonlinear birth process such that 
				explosions are not allowed, that is we admit only a finite number of jumps for any finite time. For this it is
				sufficient to assume that $\sum_{k=1}^\infty \lambda_k^{-1}=\infty$.
		\end{itemize}
		The basic properties of the fractional nonlinear birth process are recalled in the Appendix \ref{appe}.
		For a quick comparison with the classical nonlinear birth process see
		Table \ref{ipad}.
		
		Let us now call ${}_t \textswab{N}^\nu$ the number of in-links
		of a webpage chosen uniformly at random at time $t$ and define $\textswab{N}^\nu
		= \lim_{t \to \infty}{}_t \textswab{N}^\nu$.
		For this generalized Yule model we will evaluate the explicit distribution of ${}_t \textswab{N}^\nu$
		and of $\textswab{N}^\nu$ in Section \ref{distr} and their mean value in Section \ref{mee}.
		Two specific examples with rates allowing saturation are analyzed in Section \ref{satura}.

		\subsection{Distribution of $\bm{{}_t \textswab{N}^\nu$}}		

			\label{distr}		
			With the convention here and in the rest of the paper that
			empty products equal unity, we can evaluate the distribution of the random number of in-links ${}_t \textswab{N}^\nu$
			by conditioning on the random instant of time at which a webpage is created (and therefore the in-links process
			of that webpage begins).
			
			We make use here of the theoretical results contained in the papers \citep{neuts,crump,feigin,puri}.
			Conditioning on the number of webpages
			present at the observation time $t$, the random instants at which new webpages
			are created are distributed as the order statistics of iid random
			variables with distribution function
			\begin{align}
				\mathbb{P} (\mathcal{T}\le y) = \frac{\textup{e}^{\beta y}-1}{\textup{e}^{\beta t}-1}, \qquad y \in [0,t].
			\end{align}
			Hence $\mathcal{Q} = t-\mathcal{T}$, the random evolution time of the conditioned
			fractional nonlinear birth process $\mathfrak{N}^\nu(t)$, $t \ge 0$, is distributed as a truncated exponential random variable.
			It immediately follows that the distribution of ${}_t \textswab{N}^\nu$ can be determined by randomization with respect to
			$\mathcal{T}$:
			\begin{align}
				\label{randomization}
				\mathbb{P}({}_t \textswab{N}^\nu=n)
				=\mathbb{E}_{\mathcal{T}}\mathbb{P}\bigl( \mathfrak{N}^\nu(t)=n | \mathfrak{N}^\nu(\mathcal{T})=1 \bigr), \qquad n \ge 1.
			\end{align}
			Notice that the initial condition $\mathfrak{N}^\nu(\mathcal{T})=1$ for the process modelling the
			development of in-links is chosen here for consistency with the classical formulation of the Yule model
			(see \citet{yule}).
			Within the framework of network growth we are implicitely assuming that each new webpage is created
			with only one in-link.
			From a mathematical point of view, the generalization in which the fractional nonlinear birth process
			starts with $k$ individuals is straightforward. For the state probabilities,
			for instance, one should consider the general form (compare with \eqref{nlinearnu})
			\begin{equation*}
				\mathbb{P}(\bar{\mathfrak{N}}^\nu(t)=n)=
				\prod_{j=k}^{n-1} \lambda_j
				\sum_{m=k}^n \frac{ E_{\nu} (-
				\lambda_m t^\nu)}{\prod_{
				l=k,l \neq m}^n \left( \lambda_l -
				\lambda_m \right) }, \qquad n \ge k, \: t\ge 0.
			\end{equation*}

			Working out formula \eqref{randomization} we obtain
			\begin{align}
				\label{tiger}
				\mathbb{P}({}_t \textswab{N}^\nu=n) & = \frac{\beta}{1-e^{-\beta t}} \int_0^t e^{-\beta y}
				\mathbb{P}(\mathfrak{N}^\nu(y)=n)\, \textup{d}y \\
				& = \frac{1}{1-e^{-\beta t}}
				\left[ \beta^{\nu} \frac{\prod_{r=1}^{n-1}\lambda_r}{\prod_{r=1}^n(\beta^\nu+\lambda_r)}
				- \beta\int_t^\infty e^{-\beta y} \mathbb{P}(\mathfrak{N}^\nu(y)=n)\, \textup{d}y \right]. \notag
			\end{align}
			In the last step we used the explicit form of the Laplace transform
			of the state probabilities of the fractional nonlinear birth process (formula \eqref{gg}).
			Note that in the last line of \eqref{tiger} we have actually separated the limiting
			state distribution and the correction for a finite time $t$. Indeed by letting $t \to \infty$ the limiting distribution reads
			\begin{align}
				\label{pb}
				\mathbb{P}(\textswab{N}^\nu=n) = \beta^\nu
				\frac{\prod_{r=1}^{n-1}\lambda_r}{\prod_{r=1}^n(\beta^\nu+\lambda_r)}, \qquad n \ge 1.
			\end{align}
			The above formula \eqref{pb} can be reparametrized by setting $\rho^{-1}_r=\beta^\nu/\lambda_r$, obtaining
			\begin{align}
				\label{rip}
				\mathbb{P}(\textswab{N}^\nu=n) =
				\frac{\rho_n^{-1}}{\prod_{r=1}^n(\rho_r^{-1}+1)}, \qquad n \ge 1.
			\end{align}
		
			When the rates are all different we can further work out \eqref{tiger} as follows.
			Let us first evaluate the lower and upper incomplete Laplace transforms of the Mittag--Leffler function $E_\nu$
			(see the Appendix \ref{appe}, formula \eqref{ml} for the definition of the Mittag--Leffler function),
			\begin{align}
				\label{nuo}
				L = \int_0^t e^{-\beta y} E_\nu(-\lambda y^\nu) \, \textup{d} y
				= \beta^{-1}\sum_{r=0}^\infty \frac{\gamma(\nu r+1, \beta t)}{\Gamma(\nu r+ 1)}\left(-\frac{\lambda}{\beta^\nu}\right)^r,
			\end{align}		
			\begin{align}
				\label{nuo2}
				U & = \int_t^\infty e^{-\beta y} E_\nu(-\lambda y^\nu) \, \textup{d} y
				= \beta^{-1}\sum_{r=0}^\infty \frac{\Gamma(\nu r+1, \beta t)}{\Gamma(\nu r+ 1)}\left(-\frac{\lambda}{\beta^\nu}\right)^r,
			\end{align}
			where $\gamma(a,b)$ and $\Gamma(a,b)$ are respectively the lower and upper incomplete Gamma functions
			(see formulae 6.5.2 and 6.5.3 of \citet{abramowitz}), and $\beta>0$.
			By using the state probabilities distribution of the fractional nonlinear birth process (Appendix \ref{appe}, formula \eqref{nlinearnu})
			and formula \eqref{nuo},
			the distribution \eqref{tiger} (first line) can be written as
			\begin{align}
				\label{mm}
				\mathbb{P}({}_t \textswab{N}^\nu=n) =
				\frac{1}{1-e^{-\beta t}} \prod_{h=1}^{n-1} \lambda_h \sum_{m=1}^n \frac{1}{\prod_{l=1,l\ne m}^n(\lambda_l-\lambda_m)}
				\sum_{r=0}^\infty \frac{\gamma(\nu r+1,\beta t)}{\Gamma(\nu r +1)}
				\left( -\frac{\lambda_m}{\beta^\nu} \right)^r, \qquad n \ge 1,
			\end{align}
			or, equivalently, by using formula \eqref{nuo2} and the last line of \eqref{tiger}, that is highlighting the asymptotics, as
			\begin{align}
				\mathbb{P} ({}_t \textswab{N}^\nu=n) =\frac{1}{1-e^{-\beta t}}
				\prod_{h=1}^{n-1}\lambda_h \sum_{m=1}^n \frac{1}{\prod_{l=1,l\ne m}^n(\lambda_l-\lambda_m)}
				\left[ \frac{\beta^{\nu}}{\beta^\nu+\lambda_m}
				- \sum_{r=0}^\infty \frac{\Gamma(\nu r+1, \beta t)}{\Gamma(\nu r+ 1)}\left(-\frac{\lambda_m}{\beta^\nu}\right)^r \right],
				\qquad n \ge 1. \notag
			\end{align}
			
			In the next three remarks we examine three different specific cases of interest. In Remark \ref{rem1} fractionality is suppressed
			by considering $\nu=1$. In this case the model is a generalized Yule model in which a nonlinear birth process
			represents the in-links growth processes. In Remark \ref{rem2} instead we retain fractionality ($\nu \in (0,1)$) but
			with linear rates ($\lambda_r = \lambda r$). Remark \ref{rem3} recovers the known form of the in-degree distribution of a webpage
			chosen uniformly at random in the classical Yule model for any fixed time $t$ as a special case of our more general 
			formula.
			
			\begin{remark}
				\label{rem1}
				In the non fractional case, that is $\nu = 1$, we can further simplify the above probability mass function \eqref{mm}.
				Indeed, recalling that $E_1 (-\lambda_my) = e^{-\lambda_my}$ and that its lower incomplete Laplace transform
				reads $\int_0^t e^{-\beta y} e^{-\lambda_m y} \textup{d}y = (1-e^{-t(\beta+\lambda_m)})/(\beta+\lambda_m)$,
				we obtain,
				\begin{align}
					\mathbb{P}({}_t \textswab{N}^1=n) =
					\frac{\beta}{1-e^{-\beta t}} \prod_{r=1}^{n-1} \lambda_r \sum_{m=1}^n \frac{1}{\prod_{l=1,l\ne m}^n(\lambda_l-\lambda_m)}
					\left( \frac{1-e^{-(\beta+\lambda_m)t}}{\beta+\lambda_m} \right), \qquad n \ge 1,					
				\end{align}
			\end{remark}
						
			\begin{remark}
				\label{rem2}
				For linear rates, $\lambda_r = \lambda r$, $r \ge 1$, the fractional nonlinear birth process
				$\mathfrak{N}^\nu(t)$ coincides with the fractional Yule
				process $\mathfrak{N}^\nu_{\text{lin}}(t)$ (see the Appendix \ref{appe}).
				The distribution \eqref{tiger} can be written, for $n \ge 1$, as
				\begin{align}
					\label{tiger2}
					\mathbb{P}({}_t \textswab{N}_{\text{lin}}^\nu=n)
					& = \frac{\beta}{1-e^{-\beta t}} \left[ \int_0^\infty e^{-\beta y} \mathbb{P}(\mathfrak{N}^\nu_{\text{lin}}(y)=n)\, \textup{d}y
					- \int_t^\infty e^{-\beta y} \mathbb{P}(\mathfrak{N}^\nu_{\text{lin}}(y)=n)\, \textup{d}y \right].
				\end{align}
				By making use of the simpler form of the state probability distribution of the fractional Yule process
				(see the Appendix \ref{appe},
				formula \eqref{gengeo}) and the Laplace transform of the Mittag--Leffler function \eqref{mitl} we obtain that
				\begin{align}
					& \int_0^\infty e^{-\beta y} \mathbb{P}(\mathfrak{N}^\nu_{\text{lin}}(y)=n) \, \textup{d} y
					= \int_0^\infty e^{-\beta y} \sum_{j=1}^n \binom{n-1}{j-1} (-1)^{j-1} E_\nu(-\lambda jy^\nu) \, \textup{d} y\\
					& = \sum_{j=1}^n \binom{n-1}{j-1}(-1)^{j-1} \frac{\beta^{\nu-1}}{\beta^\nu+\lambda j}
					= \beta^{\nu-1} \sum_{j=1}^n \binom{n-1}{j-1}(-1)^{j-1} \int_0^\infty e^{-w(\beta^\nu+\lambda j)}\textup{d}w \notag \\
					& = \beta^{\nu-1} \int_0^\infty e^{-w(\beta^\nu + \lambda)} \sum_{j=0}^{n-1} \binom{n-1}{j} (-1)^j e^{-w\lambda j}
					= \beta^{\nu-1} \int_0^\infty e^{-w \beta^\nu} e^{-w\lambda} (1-e^{-w \lambda})^{n-1} \textup{d}w \notag \\
					& = \beta^{\nu-1} \int_0^1 e^{\frac{\beta^\nu}{\lambda}\log y} (1-y)^{n-1}\frac{\textup{d}y}{\lambda}
					= \frac{\beta^{\nu-1}}{\lambda} \int_0^1 y^{\frac{\beta^\nu}{\lambda}}(1-y)^{n-1}\, \textup{d} y \notag
					= \frac{\beta^{\nu-1}}{\lambda}\frac{\Gamma
					\left( \frac{\beta^\nu}{\lambda}+1 \right) \Gamma(n)}{\Gamma\left( \frac{\beta^\nu}{\lambda}
					+n+1 \right)}. \notag
				\end{align}
				Therefore we can write the distribution of interest for each finite time $t$ as
				\begin{align}
					\label{water}
					\mathbb{P}({}_t \textswab{N}_{\text{lin}}^\nu=n)
					& = \frac{\beta}{1-e^{-\beta t}} \left[ \frac{\beta^{\nu-1}}{\lambda}\frac{\Gamma\left(
					\frac{\beta^\nu}{\lambda}+1 \right) \Gamma(n)}{\Gamma\left( \frac{\beta^\nu}{\lambda}+n+1 \right)}
					- \int_t^\infty e^{-\beta y} \mathbb{P}(\mathfrak{N}^\nu_{\text{lin}}(y)=n)\, \textup{d}y \right]\\
					& = \frac{1}{1-e^{-\beta t}} \left[ \frac{\beta^\nu}{\lambda}\frac{\Gamma\left(
					\frac{\beta^\nu}{\lambda}+1 \right) \Gamma(n)}{\Gamma\left( \frac{\beta^\nu}{\lambda}+n+1 \right)}
					- \sum_{j=1}^n \binom{n-1}{j-1} (-1)^{j-1}
					\sum_{r=0}^\infty \frac{\Gamma(\nu r+1,\beta t)}{\Gamma(\nu r+1)} \left( -\frac{\lambda}{\beta^\nu} m \right)^r \right]\notag,
				\end{align}
				and the limiting distribution as
				\begin{align}
					\label{cup}
					\mathbb{P}(\textswab{N}_{\text{lin}}^\nu=n) = \lim_{t \to \infty} \mathbb{P}({}_t\textswab{N}_{\text{lin}}^\nu=n)
					= \frac{\beta^\nu}{\lambda}\frac{\Gamma\left( \frac{\beta^\nu}{\lambda}+1 \right) \Gamma(n)}{\Gamma\left(
					\frac{\beta^\nu}{\lambda}
					+n+1 \right)}, \qquad n \ge 1.
				\end{align}
				The probability mass function \eqref{cup} is the usual Yule or Yule--Simon distribution
				of parameter $\rho^{-1}=\beta^\nu/\lambda$. We remark that from \eqref{water} the
				contribution to the asymptotics and the finite time correction are clear.
			\end{remark}
		
			\begin{remark}
				\label{rem3}
				If $\nu=1$, equation \eqref{water} gives us the probability distribution of ${}_t \textswab{N}_{\text{lin}}^1$, i.e.\ that
				of the classical Yule model for a finite time $t$. Indeed we have that, for $n \ge 1$,
				\begin{align}
					\mathbb{P}({}_t \textswab{N}_{\text{lin}}^1=n) =
					\frac{1}{1-e^{-\beta t}} \left[ \frac{\beta}{\lambda}\frac{\Gamma\left(
					\frac{\beta}{\lambda}+1 \right) \Gamma(n)}{\Gamma\left( \frac{\beta}{\lambda}+n+1 \right)}
					- \sum_{j=1}^n \binom{n-1}{j-1} (-1)^{j-1}
					\sum_{r=0}^\infty \frac{\Gamma(r+1,\beta t)}{r!} \left( -\frac{\lambda}{\beta} m \right)^r \right]\notag,
				\end{align}
				or, in a more compact form,
				\begin{align}
					\mathbb{P}({}_t \textswab{N}_{\text{lin}}^1=n) & =
					\frac{\beta}{1-e^{-\beta t}} \sum_{j=1}^n \binom{n-1}{j-1} (-1)^{j-1} \frac{1-e^{-t(\beta + \lambda j)}}{\beta + \lambda j} \\
					& = \frac{1}{1-e^{-\beta t}}\frac{\beta}{\lambda} \left[ \frac{\Gamma\left(
					\frac{\beta}{\lambda}+1 \right) \Gamma(n)}{\Gamma\left( \frac{\beta}{\lambda}+n+1 \right)}
					- \textup{Be}\left( e^{-\lambda t}; \frac{\beta}{\lambda}+1,n \right) \right]
					= \frac{1}{1-e^{-\beta t}}\frac{\beta}{\lambda} \textup{Be}\left( 1-e^{-\lambda t} ; \frac{\beta}{\lambda}+1,n \right).
					\notag
				\end{align}			
				where $\textup{Be}(z;a,b)$ is the incomplete Beta function.
			\end{remark}

		\subsection{Mean of $\bm{{}_t \textswab{N}^\nu}$}
		
			\label{mee}		
			In this section we derive the explicit form of $\mathbb{E} \, {}_t\textswab{N}^\nu$, that is
			the expected value of the number of in-links of a webpage chosen
			uniformly at random from those present at time $t$ in the generalized Yule model with which we are concerned.
			By randomizing
			on the random creation time of the uniformly chosen webpage we have that
			\begin{align}
				\label{ps}
				\mathbb{E} \, {}_t\textswab{N}^\nu = \frac{\beta}{1-e^{-\beta t}}
				\int_0^t e^{-\beta y} \mathbb{E} \, \mathfrak{N}^\nu(y) \, \textup{d}y, \qquad t \ge 0.
			\end{align}
			In order to obtain an explicit expression we now need to use an explicit form for the expected number
			of in-links in the fractional nonlinear birth process $\mathfrak{N}^\nu$.
			To do so we make use of Theorem 3.2 of \citet{MR3021490} where the mean value of the fractional
			nonlinear birth process in the case of rates all different is derived:
			\begin{align}
				\label{ps2}
				\mathbb{E} \, \mathfrak{N}^\nu(t) = 1 + \sum_{k=1}^\infty \left\{ 1-\sum_{m=1}^k \left( \prod_{\substack{l=1\\l \ne m}}^k
				\frac{\lambda_l}{\lambda_l-\lambda_m} \right) E_\nu (-\lambda_m t^\nu) \right\}, \qquad t \ge 0.
			\end{align}
			In the next theorem we calculate, for any fixed time $t$, the expected value of the number of in-links of a webpage chosen
			uniformly at random from those present at time $t$.
		
			\begin{theorem}
				If the rates $\lambda_r$, $r \ge 1$, are all different we have, for $t \ge 0$,
				\begin{align}
					\label{lubos}
					\mathbb{E} \, {}_t\textswab{N}^\nu =
					1 + \sum_{k=1}^\infty \left[ 1- \frac{1}{1-e^{-\beta t}} \sum_{m=1}^k
					\left( \prod_{\substack{l=1\\l \ne m}}^k \frac{\lambda_l}{\lambda_l-\lambda_m} \right)
					\sum_{r=0}^\infty \frac{\gamma(\nu r+1,\beta t)}{\Gamma(\nu r +1)} \left( -\frac{\lambda_m}{\beta^\nu} \right)^r \right].
				\end{align}
			\end{theorem}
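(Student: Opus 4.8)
The plan is to begin from the randomization identity \eqref{ps} and insert into it the series representation \eqref{ps2} for the mean of the fractional nonlinear birth process, thereby reducing the whole computation to integrating \eqref{ps2} term by term against the weight $\tfrac{\beta}{1-e^{-\beta t}}e^{-\beta y}$ over $[0,t]$. Granting for the moment the exchange of the $k$-summation with the integral (addressed below), the identity \eqref{lubos} then emerges from two elementary integrations. First, the two constant contributions---the leading $1$ in \eqref{ps2} and the $1$ appearing inside each brace of the $k$-sum---each yield $\tfrac{\beta}{1-e^{-\beta t}}\int_0^t e^{-\beta y}\,\textup{d}y = 1$, producing simultaneously the leading $1$ of \eqref{lubos} and the $1$ that sits inside every bracket of the outer sum.

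Second, each Mittag--Leffler contribution $\int_0^t e^{-\beta y} E_\nu(-\lambda_m y^\nu)\,\textup{d}y$ is precisely the lower incomplete Laplace transform already evaluated in \eqref{nuo} with $\lambda=\lambda_m$. Multiplying by the prefactor $\tfrac{\beta}{1-e^{-\beta t}}$ cancels the $\beta^{-1}$ in \eqref{nuo}, leaving the factor $\tfrac{1}{1-e^{-\beta t}}$ together with the series $\sum_{r\ge 0}\tfrac{\gamma(\nu r+1,\beta t)}{\Gamma(\nu r+1)}\bigl(-\lambda_m/\beta^\nu\bigr)^r$ that appears in \eqref{lubos}. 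Since the partial-fraction coefficients $\prod_{l\ne m}\lambda_l/(\lambda_l-\lambda_m)$ are independent of $y$, they pass unchanged through the integral, and assembling the two computations reproduces \eqref{lubos} verbatim.

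The only genuine subtlety is the justification of the interchange of the infinite $k$-summation with the integral, and I expect this to be the main point requiring care. I would handle it by observing that the quantity in braces in \eqref{ps2} can be recognized as the tail probability $\mathbb{P}(\mathfrak{N}^\nu(y)>k)\ge 0$, so that the summands $e^{-\beta y}\{\,\cdots\}$ are nonnegative and Tonelli's theorem applies immediately. Alternatively, since $\mathbb{E}\,\mathfrak{N}^\nu(y)$ is finite and continuous on the compact interval $[0,t]$---a consequence of the non-explosion hypothesis $\sum_k \lambda_k^{-1}=\infty$---the partial sums are dominated by the integrable majorant $e^{-\beta y}\mathbb{E}\,\mathfrak{N}^\nu(y)$, and dominated convergence gives the same conclusion. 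This is exactly the step in which the non-explosion assumption is essential; the remaining manipulations are purely term-by-term applications of \eqref{nuo} and need no hypothesis beyond the rates being pairwise distinct, which is what makes the partial-fraction coefficients well defined.
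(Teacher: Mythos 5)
Your proof is correct and takes essentially the same route as the paper: insert \eqref{ps2} into \eqref{ps}, integrate term by term, and evaluate each Mittag--Leffler integral via the lower incomplete Laplace transform \eqref{nuo}. The paper performs the sum--integral interchange silently, so your Tonelli justification---recognizing the braces in \eqref{ps2} as the tail probabilities $\mathbb{P}(\mathfrak{N}^\nu(y)>k)\ge 0$---is a genuine (and valid) addition; note, however, that your alternative dominated-convergence argument rests on a false premise, since non-explosion ($\sum_k \lambda_k^{-1}=\infty$) does not in general force $\mathbb{E}\,\mathfrak{N}^\nu(y)$ to be finite (interleaving sparse rate-one transitions with sufficiently long stretches of extremely fast ones yields a non-explosive pure birth process whose mean is infinite for every $t>0$), so the Tonelli route---which also remains valid when both sides of \eqref{lubos} equal $+\infty$---is the one to keep.
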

		
			\begin{proof}
				It follows naturally by inserting \eqref{ps2} into \eqref{ps} as follows.
				\begin{align}
					\mathbb{E} \, {}_t\textswab{N}^\nu
					& = \frac{\beta}{1-e^{-\beta t}} \int_0^t e^{-\beta y} \left[ 1+\sum_{k=1}^\infty 
					\left\{ 1-\sum_{m=1}^k \left( \prod_{\substack{l=1\\l \ne m}}^k
					\frac{\lambda_l}{\lambda_l-\lambda_m} \right) E_\nu (-\lambda_m y^\nu) \right\} \right] \textup{d}y \\
					& = \frac{\beta}{1-e^{-\beta t}} \left[ \int_0^t e^{-\beta y} \textup{d}y +
					\sum_{k=1}^\infty \left\{ \int_0^t e^{-\beta y} \textup{d}y
					- \sum_{m=1}^k \left( \prod_{\substack{l=1\\l \ne m}}^k
					\frac{\lambda_l}{\lambda_l-\lambda_m} \right) \int_0^t e^{-\beta y}E_\nu (-\lambda_m t^\nu) \textup{d}y \right\} \right] \notag \\
					& = 1 + \sum_{k=1}^\infty \left[ 1- \frac{1}{1-e^{-\beta t}} \sum_{m=1}^k
					\left( \prod_{\substack{l=1\\l \ne m}}^k \frac{\lambda_l}{\lambda_l-\lambda_m} \right)
					\sum_{r=0}^\infty \frac{\gamma(\nu r+1,\beta t)}{\Gamma(\nu r +1)} \left( -\frac{\lambda_m}{\beta^\nu} \right)^r \right]. \notag
				\end{align}
			\end{proof}
		
			\begin{remark}
				When $\nu=1$, that is in the classical non-fractional case, the mean value simplifies to
				\begin{align}
					\mathbb{E} \, {}_t\textswab{N}^1 =
					1 + \sum_{k=1}^\infty \left[ 1- \frac{\beta}{1-e^{-\beta t}} \sum_{m=1}^k
					\left( \prod_{\substack{l=1\\l \ne m}}^k \frac{\lambda_l}{\lambda_l-\lambda_m} \right)
					\frac{1-e^{-(\beta +\lambda_m)t}}{\beta + \lambda_m} \right].
				\end{align}
			\end{remark}
		
			\begin{remark}
				The expected value of the limiting random variable $\textswab{N}^\nu$ can be determined directly from
				\eqref{lubos} as
				\begin{align}
					\label{lubos2}
					\mathbb{E} \, \textswab{N}^\nu = 1+ \sum_{k=1}^\infty
					\left[ 1-\sum_{m=1}^k \left( \prod_{\substack{l=1\\l \ne m}}^k \frac{\lambda_l}{\lambda_l-\lambda_m} \right)
					\frac{\beta^\nu}{\beta^\nu+\lambda_m} \right].
				\end{align}
			\end{remark}
			
			It is interesting to notice from \eqref{lubos} and \eqref{lubos2} that the effect of the parameter $\nu$ which is
			present in the mean value \eqref{lubos} for any fixed time $t$, determines a change in the parametrization
			in the limiting mean value \eqref{lubos2}.

		\subsection{Examples: models with saturation}
		
			\label{satura}
			The general model depicted above can be made more specific seeking for particular properties. For example
			an interesting behaviour that could possibly lead to a more realistic scenario is when the number of in-links
			for each webpage has intrinsically a fixed value to which it saturates.
			A quite general saturating behaviour can be achieved by truncating the rates at $N-1$ (so that $\lambda_N=0$). This can be done
			by considering the rates as the weights of a discrete finite measure on the finite set $\{ 1,2,\dots,N-1 \}$.
			A further rather general model admitting a saturating behaviour is the one in which the rates specialize as
			\begin{align}
				\lambda_j= \eta \left(\frac{j}{N}\right)^{\omega_1}
				\left(\frac{N-j}{N}\right)^{\omega_2} =\lambda j^{\omega_1}(N-j)^{\omega_2},
				\qquad (\omega_1,\omega_2) \in [0,\infty)\times (0,\infty), \: \eta>0,
			\end{align}			
			where $\lambda= \eta/N^{\omega_1+\omega_2}$.
			These rates clearly do not
			allow explosion in a finite time. Note that we have explicitly excluded the cases in which $\omega_2 = 0$ as this choice
			implies unbounded growth.
			By specializing the rates in \eqref{pb}, and considering $1 \le n \le N$, we have
			\begin{align}
				\mathbb{P} (\textswab{N}^\nu=n) & = \frac{\beta^\nu \prod_{r=1}^{n-1}(\lambda r^{\omega_1}(N-r)^{\omega_2})}{
				\prod_{r=1}^n (\beta^\nu+\lambda r^{\omega_1}(N-r)^{\omega_2})}
				= \frac{(N-1)!^{\omega_2}}{(N-n)!^{\omega_2}}
				\frac{\beta^\nu \lambda^{n-1} (n-1)!^{\omega_1}}{\beta^{\nu n} \prod_{r=1}^n
				\left( 1+\frac{\lambda}{\beta^\nu}r^{\omega_1}(N-r)^{\omega_2} \right)} \\
				& = \left( \frac{\lambda}{\beta^\nu} \right)^{n-1} \frac{\Gamma^{\omega_1}(n) \Gamma^{\omega_2}(N)}{\Gamma^{\omega_2}(N-n+1)}
				\frac{1}{\prod_{r=1}^n \left( 1+\frac{\lambda}{\beta^\nu}r^{\omega_1}(N-r)^{\omega_2} \right)} \notag \\
				& = \rho^{n-1} \frac{\Gamma^{\omega_1}(n) \Gamma^{\omega_2}(N)}{\Gamma^{\omega_2}(N-n+1)}
				\frac{1}{\prod_{r=1}^n \left( 1+\rho r^{\omega_1}(N-r)^{\omega_2} \right)}. \notag
			\end{align}

			\subsubsection{First example}

				\begin{figure}
					\centering
					\includegraphics[scale=.42]{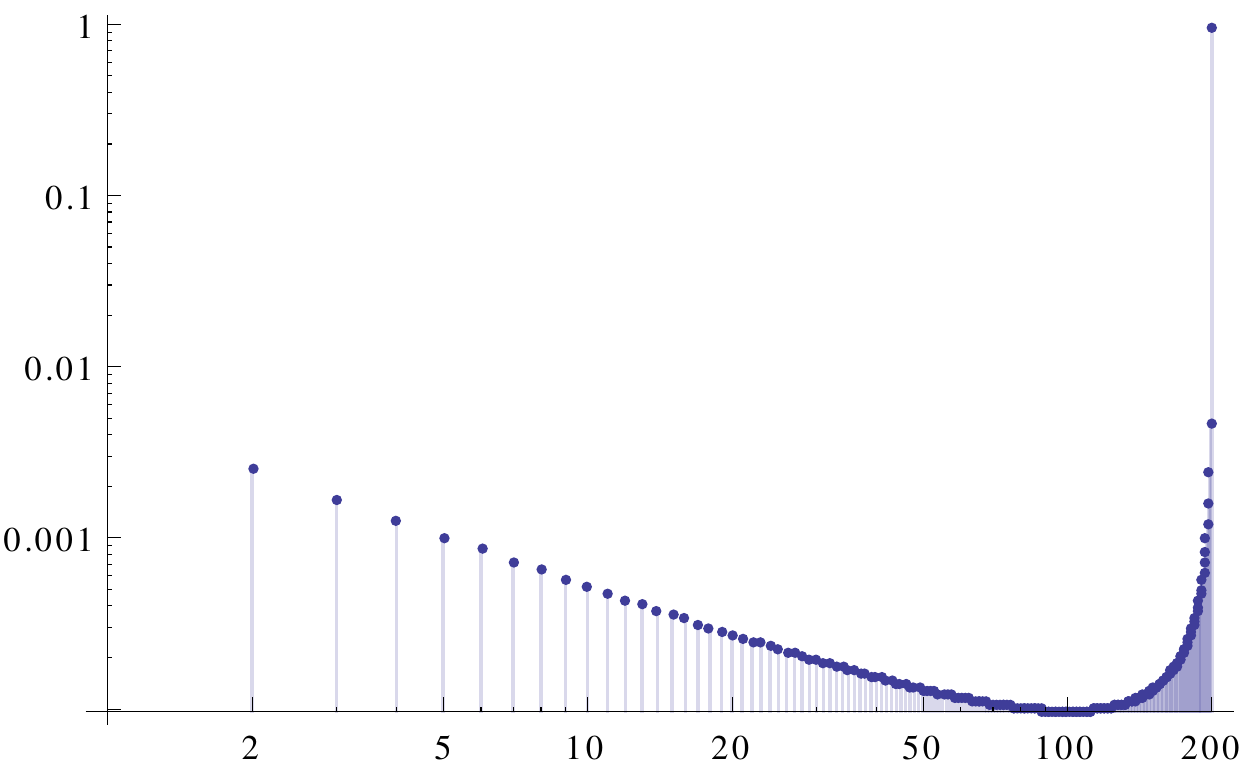}
					\includegraphics[scale=.42]{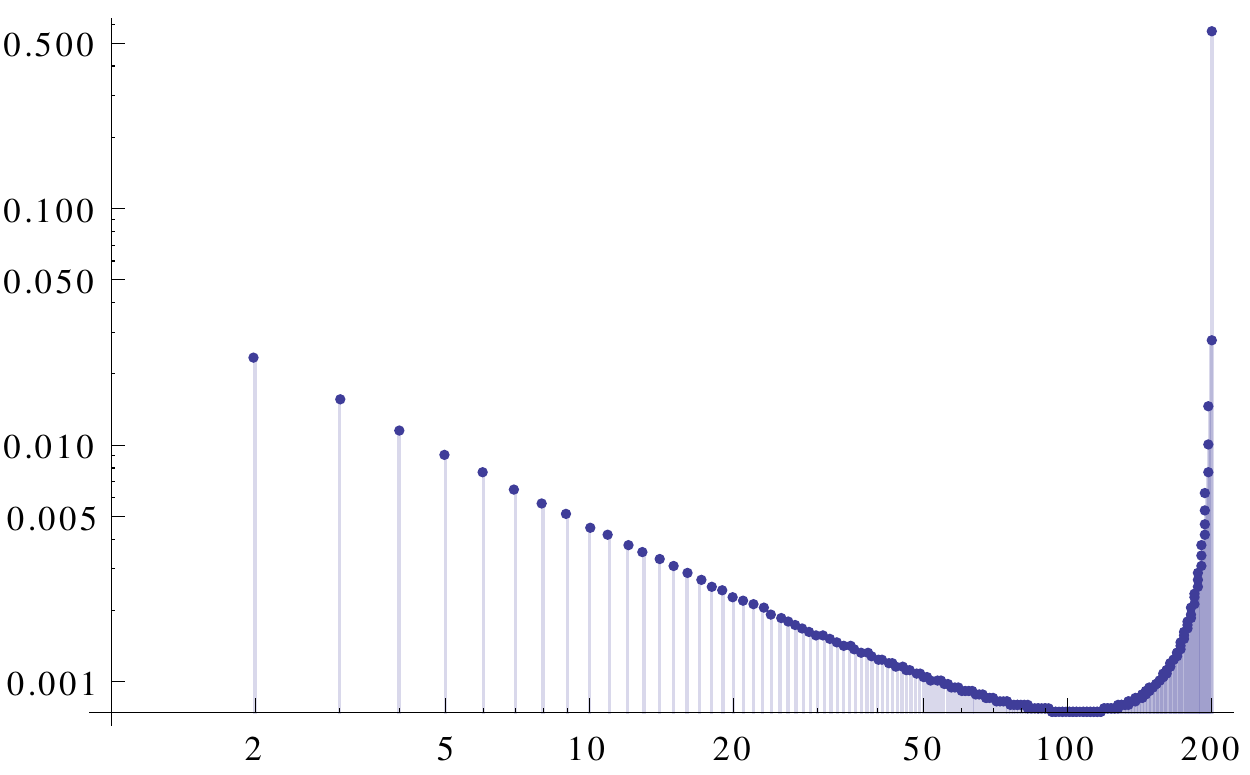}
					\includegraphics[scale=.42]{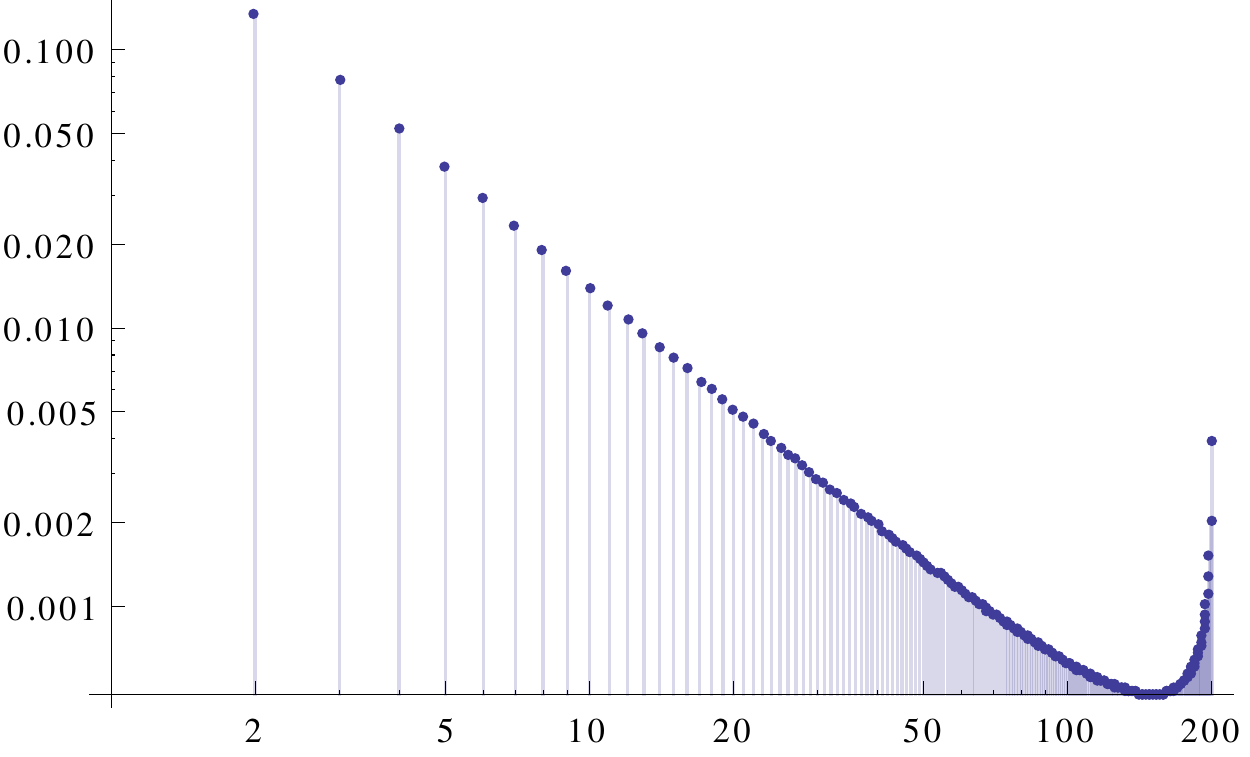}
					\includegraphics[scale=.42]{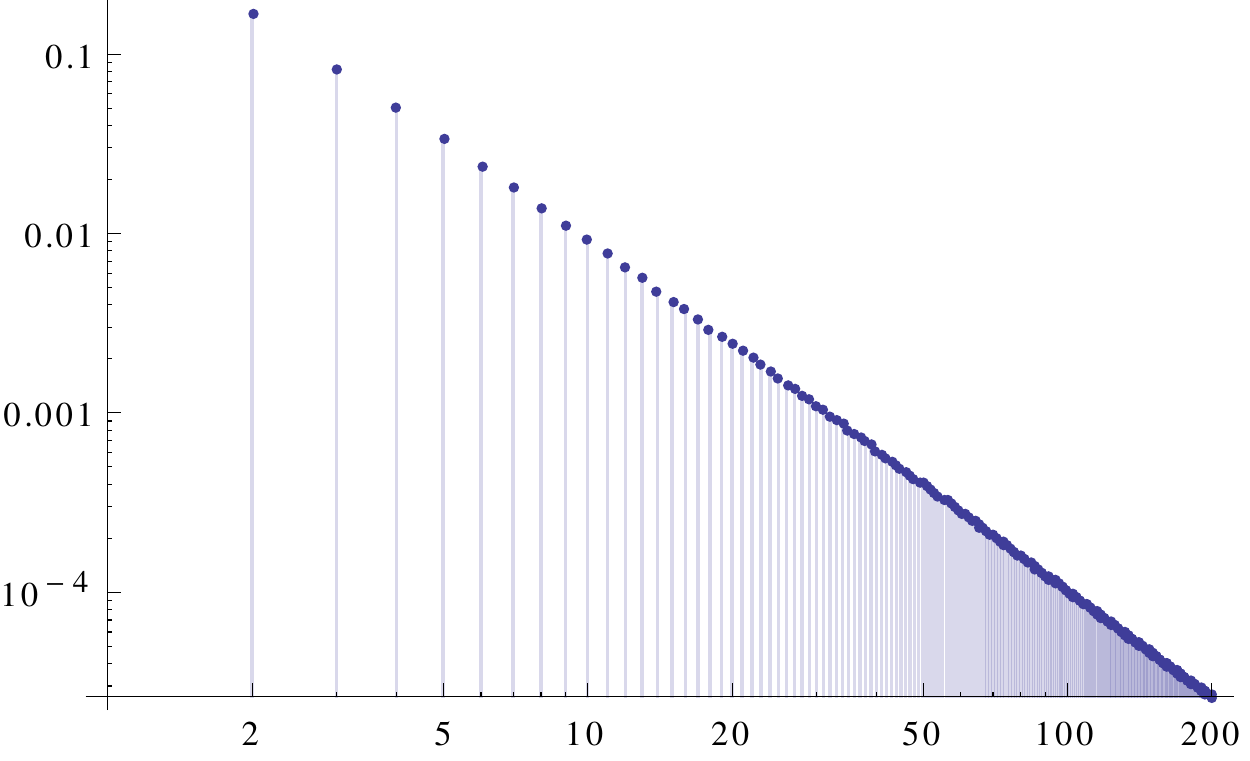}
					\includegraphics[scale=.42]{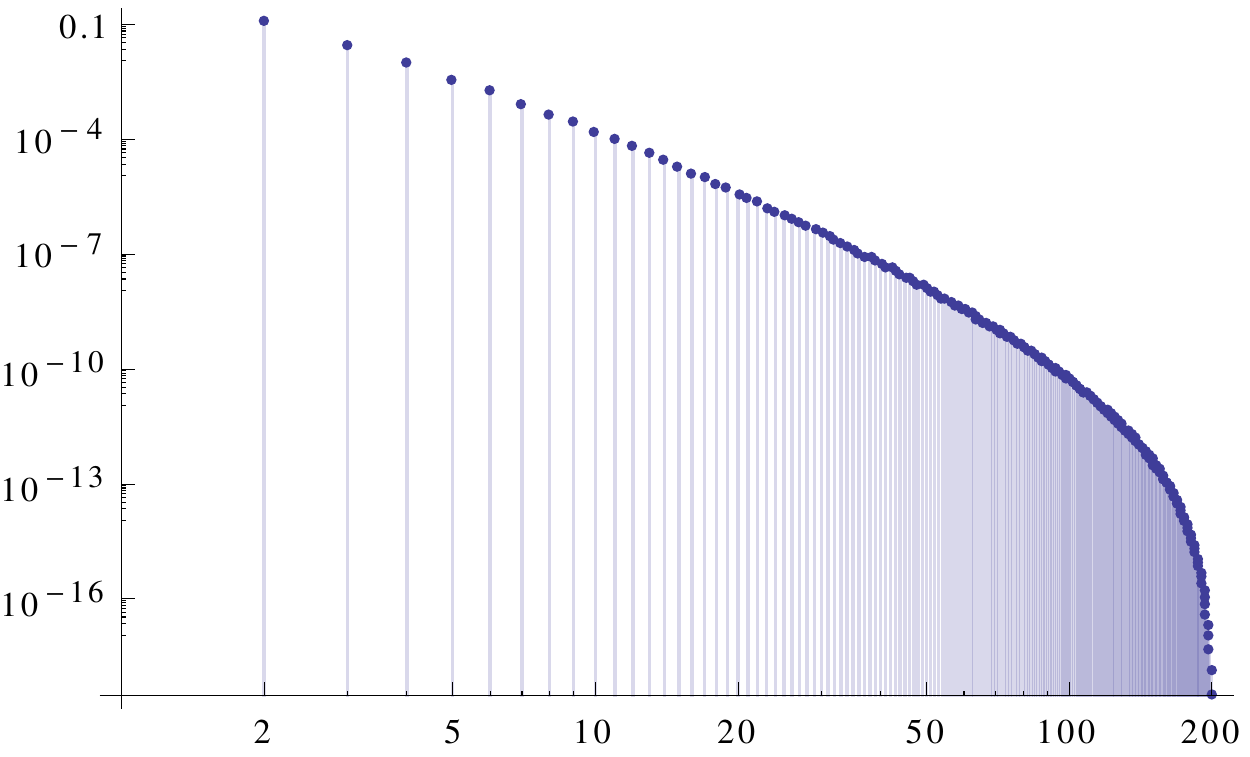}
					\includegraphics[scale=.42]{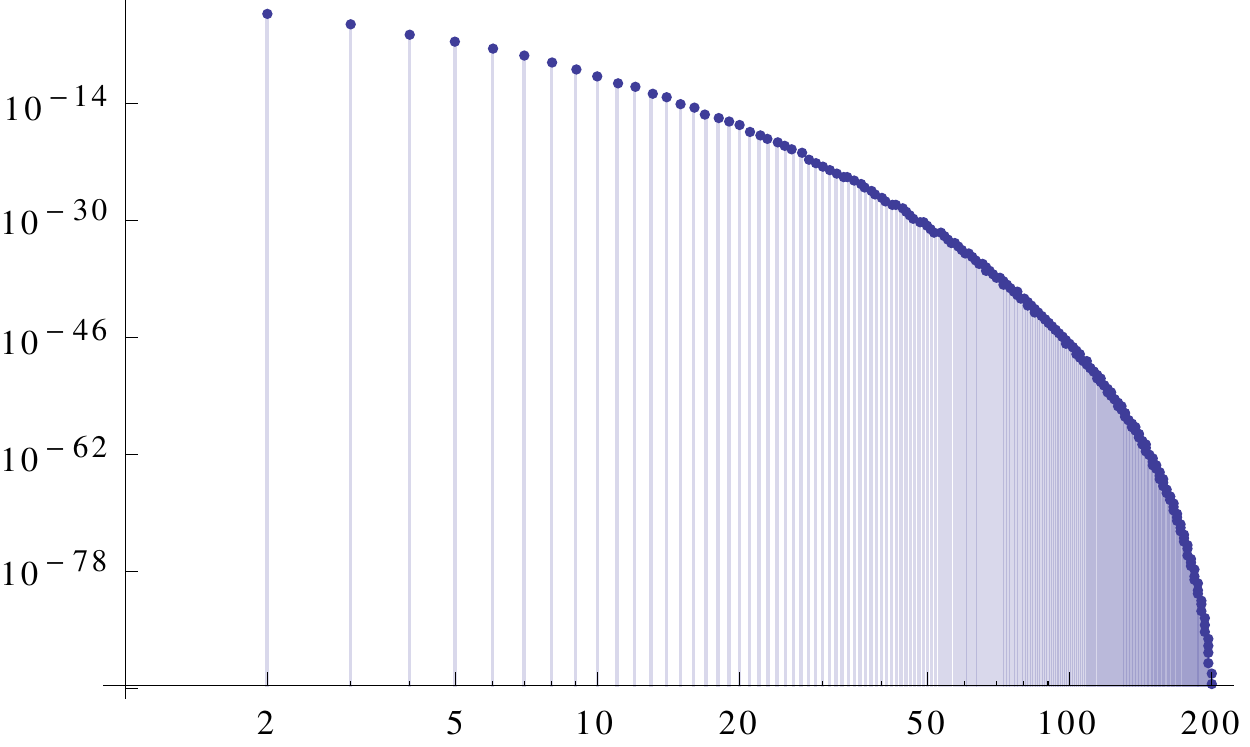}
					\caption{\label{clack}Various plots of $\mathbb{P}(\textswab{N}_{s_2}^\nu=n)$. From left to right, top to bottom, the parameters
						are respectively $(\rho,N)= \{ (1,200),(0.1,200),(0.01,200),(0.005,200),(0.001,200),(0.0001,200) \}$.}
				\end{figure}
							
				A first example is when the nonlinear rates $\lambda_j$ particularize to $\lambda_j=\lambda(N-j)$, $\lambda>0$,
				$1 \le j \le N$, where $N$
				is the threshold integer value which cannot be crossed: when the process is in state $N$ the birth rate vanishes.
				This corresponds to $(\omega_1,\omega_2)=(0,1)$, $\lambda= \eta/N$.
				In this case, since the rates are all different we can explicitly calculate the Laplace transform
				of the state distribution of the fractional nonlinear birth process governing the evolution of the in-links
				for each webpage as follows (or equivalently we suitably specialize the birth rates in formula \eqref{gg}).
				\begin{align}
					\mathbb{L}_n(z) & = \prod_{m=1}^{n-1} \lambda(N-j) \sum_{m=1}^n \frac{z^{\nu-1}}{z^\nu+\lambda(N-m)}
					\frac{1}{\prod_{l=1,l \ne m}^n (\lambda(N-l)-\lambda(N-m))} \\
					& = \sum_{j=1}^{n-1} \frac{(N-1)N \dots (N-n+1)}{(m-1)m \dots (m-m+1)(m-m-1)\dots (m-n)} \frac{z^{\nu-1}}{z^\nu+\lambda(N-m)}
					\notag \\
					& = \sum_{m=1}^n \frac{(N-1)!}{(N-n)!(m-1)!(n-m)!}(-1)^{n-m}\frac{z^{\nu-1}}{z^\nu+\lambda(N-m)} \notag \\
					& = \binom{N-1}{N-n} \sum_{m=1}^n \binom{n-1}{m-1}(-1)^{n-m} \frac{z^{\nu-1}}{z^\nu+\lambda(N-m)}, \qquad 1 \le n \le N. \notag
				\end{align}
				
				Let us now denote $\textswab{N}_{s_1}^\nu$ the size of a randomly chosen webpage for $t \to \infty$
				for this first model allowing saturation.
				We immediately have for $1 \le n \le N$,
				\begin{align}
					\mathbb{P}(\textswab{N}_{s_1}^\nu=n) = \binom{N-1}{N-n}\sum_{m=1}^n \binom{n-1}{m-1} (-1)^{n-m}
					\frac{\beta^\nu}{\beta^\nu+\lambda(N-m)},
				\end{align}
				that is to say, by using the usual parametrization $\rho=\lambda/\beta^\nu$,
				\begin{align}
					\mathbb{P}(\textswab{N}_{s_1}^\nu=n) = \binom{N-1}{N-n}\sum_{m=1}^n \binom{n-1}{m-1} (-1)^{n-m} \frac{1}{1+\rho(N-m)}.
				\end{align}
				
				\begin{remark}
					The case $(\omega_1,\omega_2) = (1,0)$ corresponds to the classical Yule model.
				\end{remark}

			\subsubsection{Second example}			

				\begin{figure}
					\centering
					\includegraphics[scale=.42]{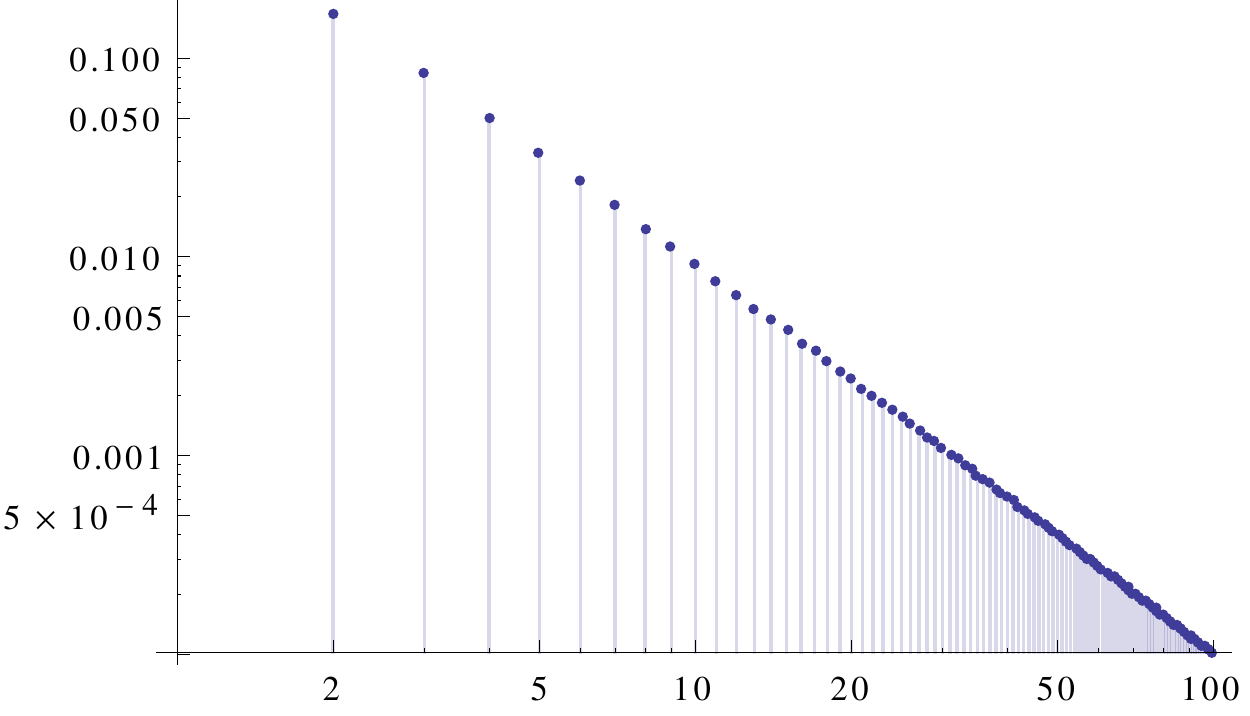}
					\includegraphics[scale=.42]{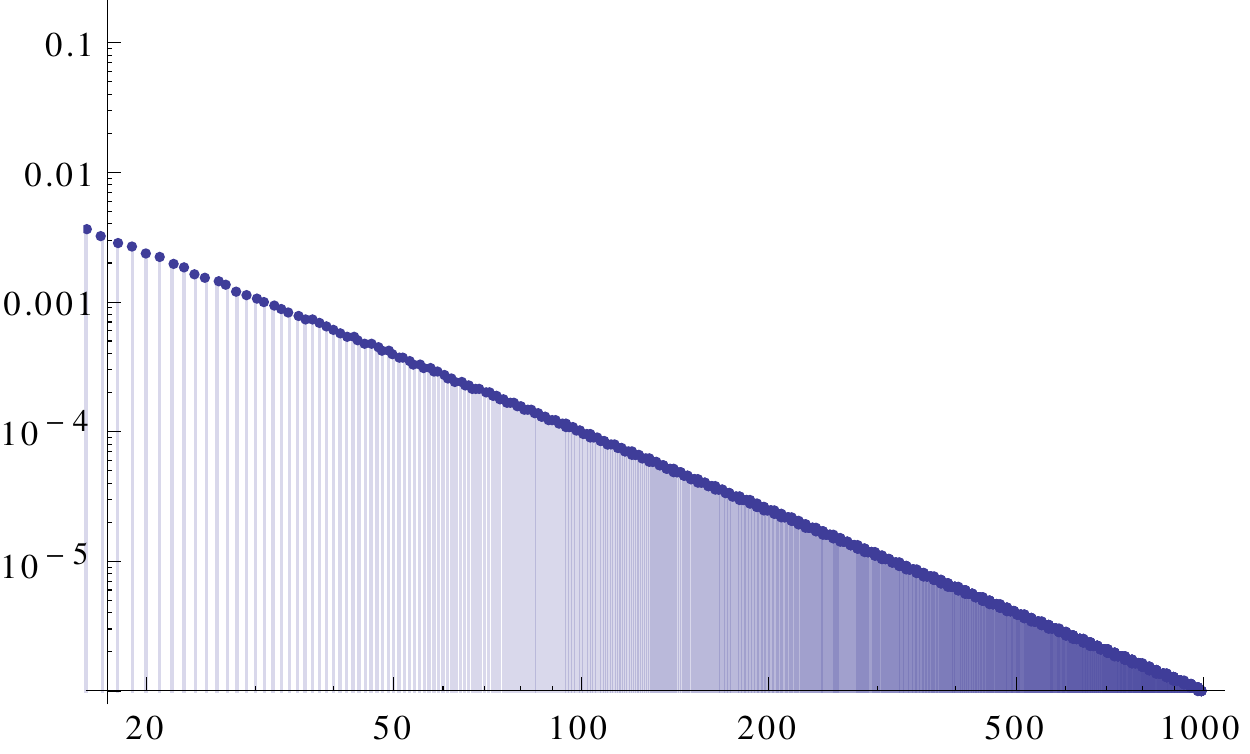}
					\includegraphics[scale=.42]{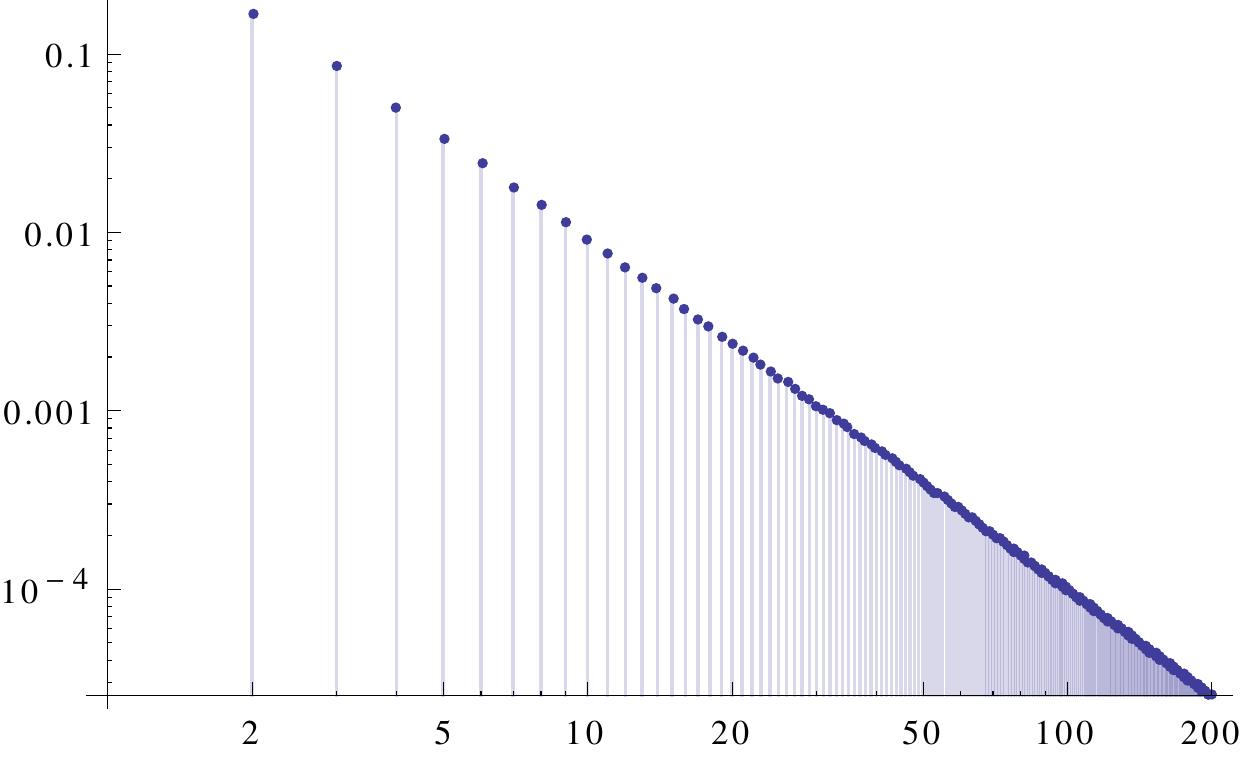}
					\caption{\label{clack2}Plots of $\mathbb{P}(\textswab{N}_{s_2}^\nu=n)$
						 when $\rho=1/(N+1)$. From left to right. The parameters are $(\rho,N)= \{ (0.01,100), (0.001,1000), (0.005,200) \}$.}
				\end{figure}

				Here we specialize the birth rates as $\lambda_r = \lambda r(N-r)$, $1 \le r \le N$.
				This corresponds to $(\omega_1,\omega_2)=(1,1)$, $\lambda= \eta/N^2$.
				If $\textswab{N}_{s_2}^\nu$ is the size of a randomly chosen webpage for $t \to \infty$
				for this second model allowing saturation, we have
				\begin{align}
					\mathbb{P} (\textswab{N}_{s_2}^\nu=n) = {} &\frac{\beta^\nu \prod_{r=1}^{n-1}(\lambda r(N-r))}{\prod_{r=1}^n (\beta^\nu+\lambda r
					(N-r))}
					= \frac{(N-1)!}{(N-n)!} \frac{\beta^\nu \lambda^{n-1} (n-1)!}{\beta^{\nu n}\prod_{r=1}^n
					\left( 1+\frac{\lambda}{\beta^\nu}r(N-r) \right)} \\
					= {} & \left( \frac{\lambda}{\beta^\nu} \right)^{n-1} \frac{\Gamma(n)\Gamma(N)}{\Gamma(N-n+1)}
					\frac{1}{\prod_{r=1}^n
					\left( 1+\frac{\lambda}{\beta^\nu}r(N-r) \right)}, \qquad 1 \le n \le N. \notag
				\end{align}
				Reparametrizing by $\rho=\lambda/\beta^\nu$, we obtain
				\begin{align}
					\label{ppp}
					\mathbb{P} (\textswab{N}_{s_2}^\nu=n) & = \rho^{n-1} \frac{\Gamma(n) \Gamma(N)}{\Gamma(N-n+1)}
					\frac{1}{\prod_{r=1}^n (1+\rho r (N-r))}, \qquad 1 \le n \le N.
				\end{align}
				Let us now simplify the product in \eqref{ppp} as follows.
				\begin{align}
					& \prod_{r=1}^n (1+\rho r (N-r)) = \prod_{r=1}^n (1-\rho r^2 + r \rho N)
					= (-\rho)^n \prod_{r=1}^n \left( r^2-2 r \frac{N}{2} - \frac{1}{\rho} \right)  \\
					& = (-\rho)^n \prod_{r=1}^n \left( r^2 + \frac{N^2}{4} -2 r \frac{N}{2} -\frac{N^2}{4} -\frac{1}{\rho} \right)
					= (-\rho)^n \prod_{r=1}^n \left[ \left( 1-\frac{N}{2} + r -1 \right)^2 -\frac{1}{4} \left( N^2+\frac{4}{\rho} \right) \right].
					\notag
				\end{align}
				By making the substitutions $a=1-N/2$ and $b=(N^2+4/\rho)^{1/2}/2$ we have
				\begin{align}
					\prod_{r=1}^n (1+\rho r (N-r)) & = (-\rho)^n  \prod_{r=1}^n \left[ (a+r-1)^2 -b^2 \right]
					= (-\rho)^n \prod_{r=1}^n (a-b+r-1) \prod_{r=1}^n (a+b+r-1).
				\end{align}
				Considering that the Pochhammer symbol (e.g.\ \citet{MR2723248}, page 136) can be written also as $(c)_n = \prod_{r=1}^n(c+r-1)
				= \Gamma(c+n)/\Gamma(c)$, we obtain that
				\begin{align}
					\prod_{r=1}^n (1+\rho r (N-r)) = (-\rho)^n \left( 1-\frac{N}{2}-\frac{1}{2}\sqrt{N^2+\frac{4}{\rho}} \right)_n
					\left( 1-\frac{N}{2}+\frac{1}{2}\sqrt{N^2+\frac{4}{\rho}} \right)_n.
				\end{align}

				Concluding, for $1 \le n \le N$, the explicit expression of the probability mass function for the random number of
				in-links in a webpage chosen uniformly at random can be written as
				\begin{align}
					\label{ccc}
					\mathbb{P}(\textswab{N}_{s_2}^\nu = n) & =
					\frac{(-1)^n}{\rho} \frac{\Gamma(n)\Gamma(N)}{\Gamma(N-n+1)}
					\frac{\Gamma \left( 1-\frac{N}{2}-\frac{1}{2} \sqrt{N^2+\frac{4}{\rho}} \right)}{\Gamma \left( n+1-\frac{N}{2}
					-\frac{1}{2} \sqrt{N^2+\frac{4}{\rho}} \right)}
					\frac{\Gamma \left( 1-\frac{N}{2}+\frac{1}{2} \sqrt{N^2+\frac{4}{\rho}} \right)}{\Gamma
					\left( n+1-\frac{N}{2} +\frac{1}{2} \sqrt{N^2+\frac{4}{\rho}} \right)} \\
					& = \rho^{-1} \frac{\Gamma(n)\Gamma(N)}{\Gamma(N-n+1)}
					\frac{\Gamma \left( \frac{1}{2} \sqrt{N^2+\frac{4}{\rho}}+1-\frac{N}{2} \right)}{\Gamma \left( \frac{1}{2}
					\sqrt{N^2+\frac{4}{\rho}}+1-\frac{N}{2} +n \right)}
					\frac{\Gamma \left( \frac{1}{2} \sqrt{N^2+\frac{4}{\rho}}+\frac{N}{2}-n \right)}{\Gamma
					\left( \frac{1}{2} \sqrt{N^2+\frac{4}{\rho}}+\frac{N}{2} \right)}. \notag
				\end{align}
								
				See in Figure \ref{clack} various plots of the probability mass function $\mathbb{P}(\textswab{N}_{s_2}^\nu = n)$, $n \ge 1$
				(equation \eqref{ccc}) for different values of the characterizing parameters, $N$ (threshold at which saturation occurs),
				and $\rho$ (which takes into account the webpage appearing rate $\beta$, the in-links appearing rate $\lambda$, and the parameter of
				fractionality $\nu$). Notably, when $\rho = 1/(N+1)$ the distribution \eqref{ccc}
				simplifies to
				\begin{equation}
					\label{elv}
					\mathbb{P}(\textswab{N}_{s_2}^\nu = n) = \left( 1+ \frac{1}{N} \right) \frac{1}{n^2+n}, \qquad 1 \le n \le N.
				\end{equation}
				
				The above distribution is shown in Figure \ref{clack2} for different values of the parameter $\rho$. 
				Figure \ref{clack3} shows a graphical investigation of the asymptotic probability of selecting a
				saturated ($\mathbb{P}(\textswab{N}_{s_2}^\nu=N)$)
				or almost saturated ($\mathbb{P}(\textswab{N}_{s_2}^\nu=N-1)$)
				webpage with $\rho=1/(N-1)^\alpha$, $\alpha > 0$, with respect to the threshold $N$. First note that
				for $\alpha=1$ (the case considered in Figure \ref{clack2}) we have a perfect power-law
				behaviour as formula \eqref{elv} becomes $\mathbb{P}(\textswab{N}_{s_2}^\nu=N) = N^{-2}$, $N \ge 1$ (see Figure
				\ref{clack3}(b)). Interestingly enough, the shape and the limiting value of the probability of selecting a saturated webpage
				strongly depend on whether $\alpha$ is larger or smaller than unity.

				\begin{figure}
					\centering
					\subfigure[Plot of the limiting probability $\mathbb{P}(\textswab{N}_{s_2}^\nu=N)$, for $N \ge 1$,
						$\rho=1/(N+1)^\alpha$, $\alpha = (0.1,0.2,0.3,0.4,0.5,0.6,0.7,0.8)$
						from top to bottom.]{\includegraphics[scale=.42]{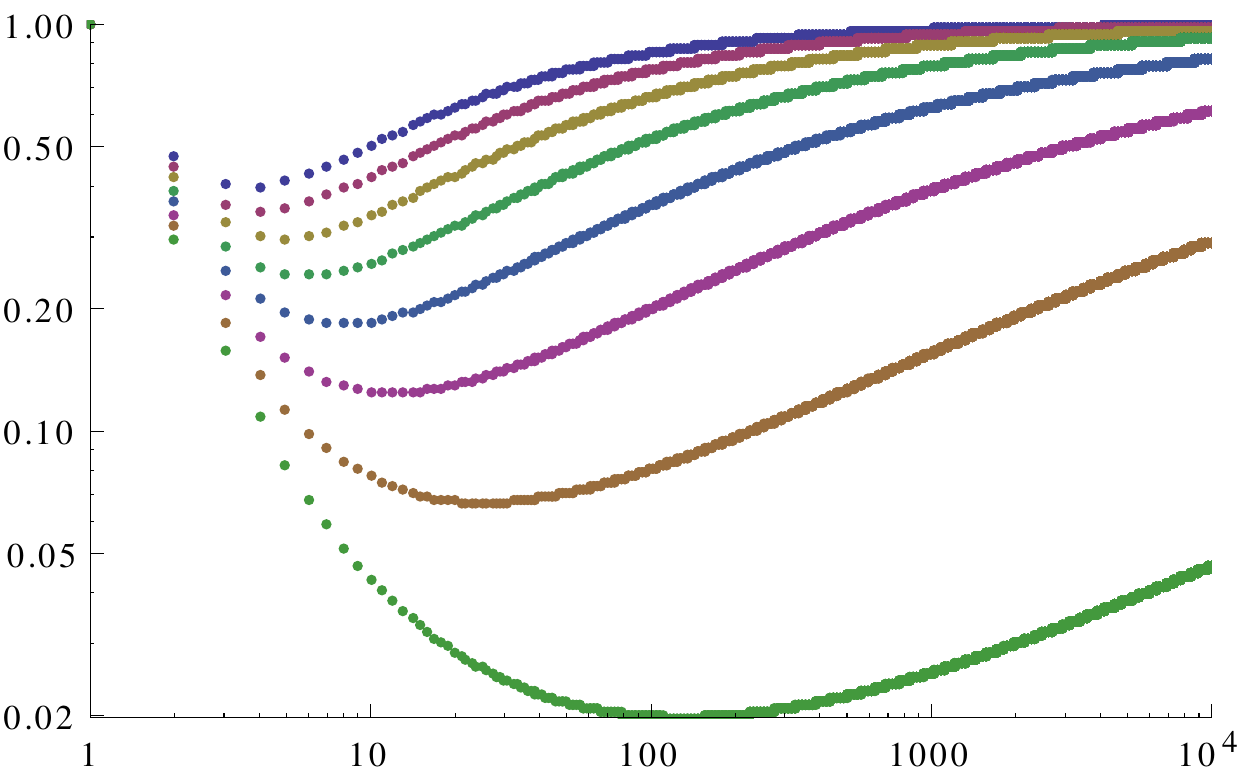}}
					\quad
					\subfigure[Plot of $\mathbb{P}(\textswab{N}_{s_2}^\nu=N)$, $N \ge 1$, $\rho=1/(N+1)^\alpha$, $\alpha = (0.8,0.9,1,1.1,1.2)$
						from top to bottom.]{\includegraphics[scale=.42]{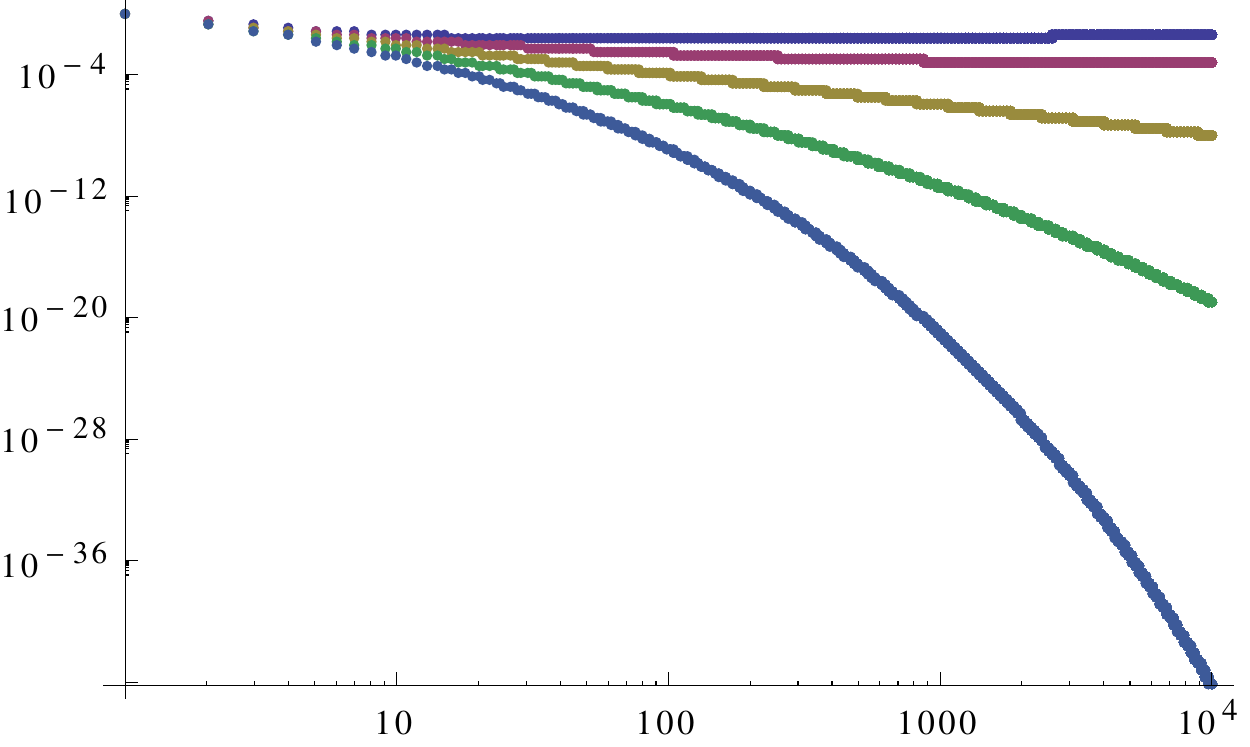}}
					\quad
					\subfigure[Plot of $\mathbb{P}(\textswab{N}_{s_2}^\nu=N-1)$, $N \ge 1$,
						$\rho=1/(N+1)^\alpha$, $\alpha = (0.1,0.2,0.3,0.4,0.5,0.6)$
						from top to bottom.]{\includegraphics[scale=.42]{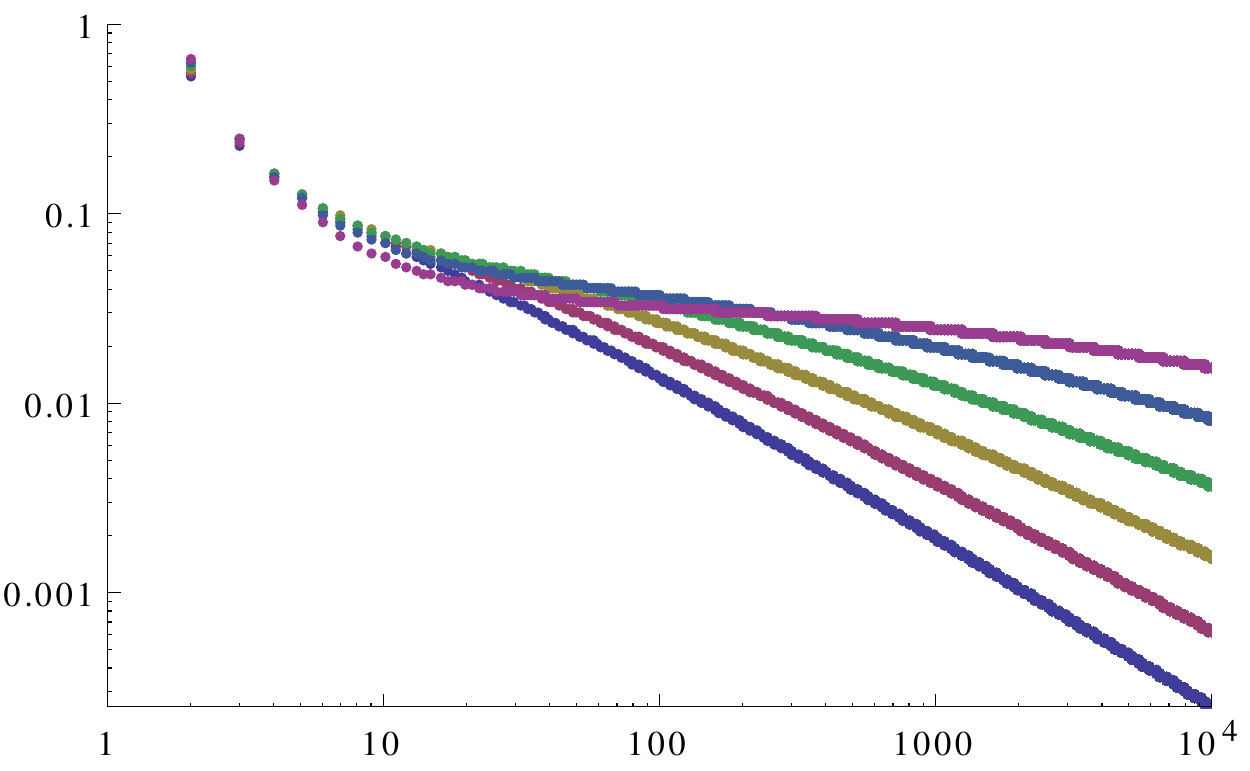}}
					\caption{\label{clack3}The limit probability of selecting a saturated webpage (figures (a) and (b)) or an
						almost saturated webpage (figure (c)) with parameter $\rho=1/(N+1)^\alpha$ for different values
						of $\alpha$. Notice how, if $\alpha \in (0,1)$, the probability tends to unity for $N \to \infty$
						(figures (a) and (b)) while for $\alpha \ge 1$ it decreases towards zero. Accordingly,
						the probability of selecting a webpage with $N-1$ in-links vanishes asymptotically for $\alpha \in (0,1)$ (figure (c)).}
				\end{figure}

	\section{Summary and Conclusions}
	
		In the paper we have developed a model which generalizes the classical Yule model still maintaining mathematical tractability.
		The presented model is interesting in that it admits a nonlinear growth for the number of in-links.
		More precisely, the generalized model is constructed by considering a fractional nonlinear birth process with completely
		general rates governing the process of creation of in-links for each webpage present at a specific time.
		The distribution of the number of in-links for a webpage chosen uniformly at random is rather different from that
		of the classical Yule model for each finite time $t$ even considering linear rates.
		When $t$ goes to infinity (and for linear rates) the obtained distribution is
		again a Yule distribution but with a different characterizing parameter $\rho$. This is a particularly important
		fact in the sense that considering only the limiting behaviour,
		any empirical Yule distribution recorded on real data can be consequence either
		of an underlying classical Yule model or of a fractional linear Yule model with the same value of $\rho$. Notice however that
		$\rho$ must be interpreted appropriately taking into consideration also the presence of fractionality (given by the value of the
		parameter $\nu$).
		The distribution \eqref{rip} is expressed in a very general form due to the fact that the rates are practically
		unspecified. In Section \ref{satura}, as an example of the different specific cases that can be
		obtained by specializing the rates, we have chosen rates that produce a saturating behaviour.
		Also in this rather realistic specific case, the explicit form of the limiting distribution
		of the number of in-links for a webpage chosen uniformly at random is derived. Figures \ref{clack} and \ref{clack2}
		show that even specializing the rates, the overall shape of the distribution can be quite different.

	\appendix
	\section{Appendix}
	
		\label{appe}	
		For the sake of self-containedness we give here a brief description of the
		fractional nonlinear birth process. For a quick comparison with the classical nonlinear birth process see
		Table \ref{ipad}.
		
		Let us consider a population of individuals
		developing with continuous time $t$ initiated by one single
		initial progenitor at time $t=0$. We indicate the random number of individuals in the population for any fixed time $t$
		with the random variable $\mathfrak{N}^\nu(t)$. It is known \citep{MR2730651} that the state probabilities $p_n^\nu(t) =
		\mathbb{P}(\mathfrak{N}^\nu(t)=n)$, $n \ge 1$, satisfy the system of difference-differential equations
		\begin{equation}
			\label{fra}
			\frac{\textup{d}^\nu p_n^\nu}{\textup{d}t^\nu} = - \lambda_n p_n^\nu + \lambda_{n-1} p^\nu_{n-1}, \qquad n \geq 1,
		\end{equation}
		where $p_0^\nu(s)=0$. Moreover, $p_n^\nu(0) =\delta_{n,1}$, that is the process starts with only one initial progenitor,
		and the fractional derivative is the Caputo derivative (see e.g. \citet{MR2218073,MR2680847}). Briefly, the Caputo derivative is
		an integral operator of convolution-type with a singular power-law kernel. The Caputo derivative can be defined in several
		equivalent ways. We consider here the following form:

		\begin{definition}[Caputo derivative]
			\label{cap}
		    Let $\alpha>0$, $m = \lceil \alpha \rceil$, and $f \in AC^m[a,b]$.
		    The Caputo derivative of order $\alpha>0$ is defined as
		    \begin{equation}
		        \label{Capu}
		        \frac{\textup{d}^\alpha}{\textup{d}t^\alpha} f(t)= \frac{1%
		        }{\Gamma(m-\alpha)}\int_a^{t}(t-s)^{m-1-\alpha}\frac{\textup{d}^m}{\textup{d}s^m}f(s) \, ds.
		    \end{equation}
		\end{definition}		

		In our case we have $\alpha=\nu \in (0,1)$, $m=1$, $a=0$, obtaining
		
		\begin{equation}
			\label{caputo}
			\frac{\textup{d}^\nu}{\textup{d}t^\nu} p_n^\nu(t)= \frac{1}{\Gamma \left( 1- \nu \right)} \int_0^t \frac{ \frac{\textup{d}}{\textup{d}s}
			p_n^\nu \left(
			s \right)}{\left( t-s \right)^\nu} \, \textup{d}s, \qquad 0 < \nu < 1.
		\end{equation}

		It is evident from the above Definition \ref{cap} that the Caputo derivative is a non-local operator in the sense that the integration
		over the interval $(0,t)$ furnishes the system with a persistent memory. Roughly speaking the first order derivative
		$\frac{\textup{d}}{\textup{d}s}p_n^\nu(s)$ is evaluated along the whole time interval $(0,t)$ and weighted by means of the power-law kernel. 
				
		The state probabilities $p_n^\nu (t)$
		of the fractional
		birth process can be explicitly determined and (with the convention that empty products equal unity)
		have the form (for the nonlinear rates $\lambda_j$, $j \ge 1$, all different)
		\citep{MR2730651}
		\begin{equation}
			\label{nlinearnu}
			p_n^\nu (t) =
			\mathbb{P}(\mathfrak{N}^\nu(t)=n)=
			\prod_{j=1}^{n-1} \lambda_j
			\sum_{m=1}^n \frac{ E_{\nu} (-
			\lambda_m t^\nu)}{\prod_{
			l=1,l \neq m}^n \left( \lambda_l -
			\lambda_m \right) }, \qquad n \ge 1, \: t\ge 0,
		\end{equation}
		where $E_{\nu} (\zeta)$ is the so-called Mittag--Leffler function,
		a special function defined as
		\begin{equation}
			\label{ml}
			E_{\nu} \left( \zeta \right) = \sum_{h=0}^\infty
			\frac{\zeta^h}{\Gamma \left( \nu h+1 \right)}, \qquad
			\zeta \in \mathbb{R}, \: \nu > 0,
		\end{equation}
		and having Laplace transform
		\begin{equation}
			\label{mitl}
			\mathcal{L} \bigl( E_\nu(-\lambda t^\nu) \bigr) (z) = \int_0^\infty e^{- z t} E_{\nu} \left( - \xi t^\nu \right) \textup{d}t =
			\frac{z^{\nu -1}}{z^\nu + \xi}, \qquad \nu >0, \: \xi \in \mathbb{R}.
		\end{equation}
		The state probabilities \eqref{nlinearnu} can be actually derived by means of an iterated application of the Laplace transform
		on the equations \eqref{fra} starting from $n=1$. For details on this point see \citet{MR2730651}, Section 2.
		The Mittag--Leffler function \eqref{ml} is in practice a generalization of the exponential function in the
		sense that $E_{1} \left( \zeta \right) = \exp(\zeta)$. General properties of the Mittag--Leffler functions are
		contained in many classical reference books and articles (see e.g.\ the very recent monograph \citet{MR3244285} and the references
		listed therein).

		In the present paper we will often make use of the Laplace transform of the state probabilities \eqref{nlinearnu}
		of the fractional nonlinear birth process.
		From \citet{MR2730651,MR3021490} we we can easily check that
		that
		\begin{align}
			\label{gg}
			\mathbb{L}_n(z) = \int_0^\infty e^{-z t} p_n^\nu(t)\, \textup{d}t
			= z^{\nu-1} \frac{\prod_{r=1}^{n-1}\lambda_r}{\prod_{r=1}^n(z^\nu+\lambda_r)}, \qquad n \ge 1.
		\end{align}
		If the rates are all different, equation \eqref{gg} can be written as
		\begin{align}
			\label{moto}
			\mathbb{L}_n(z) = \prod_{r=1}^{n-1}\lambda_r \sum_{m=1}^n \frac{1}{\prod_{l=1,l\ne m}^n(\lambda_l-\lambda_m)}
			\frac{z^{\nu-1}}{z^\nu+\lambda_m}, \qquad n \ge 1,
		\end{align}
		that is a more manageable form for the purpose of specializing the rates.
		
		For more insights on the properties of the fractional nonlinear birth process see \citet{MR2730651,MR3021490}.
		Here we conclude this section by recalling
		an interesting representation of the fractional nonlinear birth process as a time-changed process and by giving some
		details of the specific case in which the rates are linear. 
		Regarding the first point the fractional nonlinear birth process
		can be constructed as a classical birth process
		stopped at an independent random time given by the inverse process to
		an independent $\nu$-stable subordinator. Notice that stable subordinators are increasing spectrally positive
		L\'evy processes with L\'evy measure given by $m(\textup{d}x)= \left[ \nu/\Gamma(1-\nu) \right]x^{-1-\nu}\textup{d}x$.
		For more details on this last point see \citet{MR1406564, MR3155252}.
		
		An interesting particular case is when the rates are linear, i.e.\ $\lambda_r=\lambda r$. Here the state probability distribution
		\eqref{nlinearnu} specializes in the rather simple form
		\begin{align}
			\label{gengeo}
			p_n^\nu (t) = \sum_{j=1}^n \binom{n-1}{j-1} (-1)^{j-1} E_\nu(-\lambda jt^\nu), \qquad t \ge 0, \: n \ge 1.
		\end{align}
		The geometric distribution of the linear birth process (also known as Yule or Yule--Furry process and indicated in the paper
		with $\mathfrak{N}^\nu_{\text{lin}}(t)$, $t \ge 0$) is retrieved from \eqref{gengeo}
		if the parameter $\nu$ is taken equal to unity. Properties of the fractional Yule process have been studied in
		\citet{MR2479327,MR2730651} while estimators for the intensity $\lambda$ and the fractional parameter $\nu$ have been
		derived in \citet{MR2912344,MR3165549}.
		
		\subsection*{Acknowledgments} P.\ Lansky was supported by the Czech Science Foundation project 15-06991S. L.\ Sacerdote and
			F.\ Polito were supported by the projects ``Application driven Markov and non Markov models'' and ``Stochastic modelling
			beyond diffusions'' (Universit\`a degli Studi di Torino).
		
		\begin{table}\centering
			\scalebox{0.73}{\begin{tabular}{c|c|c|c}
				& \thead{Nonlinear Birth Process} & \thead{Fractional Nonlinear Birth Process} & \thead{Related \\ Formulas} \\ & & & \\
				\hline & & & \\
				\thead{Governing \\ Equations} & $\frac{\textup{d} p_n}{\textup{d}t^\nu}
				= - \lambda_n p_n + \lambda_{n-1} p_{n-1}, \quad n \geq 1$
				& $\frac{\textup{d}^\nu p_n^\nu}{\textup{d}t^\nu} = - \lambda_n p_n^\nu + \lambda_{n-1} p^\nu_{n-1}, \quad n \geq 1$
				& \makecell{\eqref{fra} \\ \vspace{-.3cm} \\ (1.3) of \citep{MR2730651}} \\ & & & \\
				\thead{State \\ Probabilities \\ (rates different)} & $\prod_{j=1}^{n-1} \lambda_j
				\sum_{m=1}^n \frac{ \exp (-
				\lambda_m t)}{\prod_{
				l=1,l \neq m}^n \left( \lambda_l -
				\lambda_m \right) }, \quad n \ge 1$ &
				$\prod_{j=1}^{n-1} \lambda_j
				\sum_{m=1}^n \frac{ E_{\nu} (-
				\lambda_m t^\nu)}{\prod_{
				l=1,l \neq m}^n \left( \lambda_l -
				\lambda_m \right) }, \quad n \ge 1$ & \makecell{\eqref{nlinearnu} \\ \vspace{-.3cm}
				\\ (1.5), (1.11) of \citep{MR2730651}} \\ & & & \\
				\thead{Laplace Transf.\\ of State Probab. \\ (rates different)} &
				$\prod_{r=1}^{n-1}\lambda_r \sum_{m=1}^n \frac{1}{\prod_{l=1,l\ne m}^n(\lambda_l-\lambda_m)}
				\frac{1}{z+\lambda_m}, \quad n \ge 1$ &
				$\prod_{r=1}^{n-1}\lambda_r \sum_{m=1}^n \frac{1}{\prod_{l=1,l\ne m}^n(\lambda_l-\lambda_m)}
				\frac{z^{\nu-1}}{z^\nu+\lambda_m}, \quad n \ge 1$ & \makecell{\eqref{moto} \\ \vspace{-.3cm} \\
				(2.26 of \citep{MR2730651} } \\ & & & \\
				\thead{Mean} & $1 + \sum_{k=1}^\infty \left\{ 1-\sum_{m=1}^k \left( \prod_{\substack{l=1\\l \ne m}}^k
				\frac{\lambda_l}{\lambda_l-\lambda_m} \right) \exp (-\lambda_m t) \right\}$
				& $1 + \sum_{k=1}^\infty \left\{ 1-\sum_{m=1}^k \left( \prod_{\substack{l=1\\l \ne m}}^k
				\frac{\lambda_l}{\lambda_l-\lambda_m} \right) E_\nu (-\lambda_m t^\nu) \right\}$ & \makecell{\eqref{ps2} \\ \vspace{-.3cm} \\ (3.14)
				of \cite{MR3021490}} \\ & & & \\
				\thead{Density of $k$th \\ Waiting Time} & $\lambda_k \exp (-\lambda_k s), \quad k\geq 1$
				& $\lambda_k s^{\nu-1} E_{\nu,\nu} (-\lambda_k s^\nu), \quad k\geq 1$ & (3.8) of \citep{MR3021490}
			\end{tabular}}
			\caption{\label{ipad}Summary of the basic properties of the nonlinear birth process $\mathfrak{N}^1$ (second column) compared with the
				corresponding of the fractional nonlinear birth process $\mathfrak{N}^\nu$ (third column). The last column gives
				references for the corresponding property.}
		\end{table}		
		
	\bibliographystyle{abbrvnat}
	\bibliography{paper}

\end{document}